\numberwithin{equation}{section}
\def\bysame{\leavevmode\hbox to3em{\hrulefill}\thinspace}
\theoremstyle{plain}
\newtheorem*{theorem*}{Theorem}
\newtheorem{lemma}{Lemma}
\newtheorem*{conjecture*}{Conjecture}
\newtheorem*{lemma*}{Lemma}
\theoremstyle{definition}
\newtheorem{definition}{Definition}
\newtheorem*{definition*}{Definition}
\theoremstyle{remark}
\begin{document}

\date{} 
\title[On the Shrinkable U.S.C. Decomposition Spaces of Spheres]{On the Shrinkable U.S.C. Decomposition Spaces of Spheres}
\author{Shijie Gu}

\address{Department of Mathematics \& Statistics\\
University of Nevada, Reno\\
1664 N. Virginia Street
Reno, NV, USA}

\email{sgu@unr.edu}

\thanks{2010 Mathematics Subject Classification: Primary 57N15; Secondary 57N45, 57N50}

\begin{abstract}
Let $G$ be a u.s.c decomposition of $S^n$, $H_G$ denote the set of nondegenerate elements and $\pi$ be the projection of $S^n$ onto $S^n/G$. Suppose that each point in the decomposition space has arbitrarily small neighborhoods with ($n-1$)-sphere frontiers which miss $\pi(H_G)$, and such frontiers satisfies the Mismatch Property. Then this paper shows that this condition implies $S^n/G$ is homeomorphic to $S^n$ ($n\geq 4$). This answers a weakened form of a conjecture asked by Daverman [3, p. 61]. In the case $n=3$, the strong form of the conjecture has an affirmative answer from Woodruff [12].
\end{abstract}
\maketitle


\newcommand\sfrac[2]{{#1/#2}}

\newcommand\cont{\operatorname{cont}}
\newcommand\diff{\operatorname{diff}}

\section{Introduction}
The following open question was asked by Daverman [3, p. 61] about thirty years ago,
\begin{conjecture*}
Suppose $G$ is a u.s.c. decomposition of $E^n$ such that for each $g\in H_G$ and each open neighborhood $W_g$ of $g$ there exists a neighborhood $U_g$ with $g\subset U_g\subset W_g$, where the frontier of $U_g$ is an $(n-1)$-sphere missing $N_G$. G is shrinkable.
\end{conjecture*}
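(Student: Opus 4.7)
The plan is to verify Bing's shrinkability criterion for $G$: given a $G$-saturated open cover $\mathcal{U}$ of $E^n$ and $\epsilon>0$, I will produce a self-homeomorphism $h$ of $E^n$ that carries every $g\in H_G$ to a set of diameter less than $\epsilon$ while moving each point within some member of $\mathcal{U}$.

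First I would exploit the hypothesis locally. For each nondegenerate $g$ and each scale $k$, choose a neighborhood $U_g^{(k)}$ of $g$ whose frontier $\Sigma_g^{(k)}$ is an $(n-1)$-sphere disjoint from $N_G$, arranged so that $\operatorname{diam} U_g^{(k)}\to 0$. Because $\Sigma_g^{(k)}$ misses every nondegenerate element, it is $G$-saturated, and $\overline{U_g^{(k)}}$ partitions $E^n$ into two $G$-saturated pieces meeting only along $\Sigma_g^{(k)}$. Within a single such tower one constructs an auto-homeomorphism supported in $U_g^{(1)}$ that squeezes $g$ into $U_g^{(K)}$ for $K$ large, and hence to diameter less than $\epsilon$.

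Next I would cover $N_G$ efficiently. Upper semicontinuity and local compactness let me select a locally finite family $\{U_{g_\alpha}^{(1)}\}$ whose interiors cover $N_G$, then choose the inner spheres $\Sigma_{g_\alpha}^{(K_\alpha)}$ deep enough that each local squeeze respects the size bound. If this family could be made pairwise disjoint, one would simply take the disjoint union of the local squeezes as $h$, and standard saturation arguments would verify membership in $\mathcal{U}$.

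The main obstacle, and the reason the full conjecture is subtle, is that distinct spheres $\Sigma_{g_\alpha}^{(k)}$ and $\Sigma_{g_\beta}^{(k')}$ may intersect in wild ways even though each misses $N_G$; a squeeze supported in one $U_{g_\alpha}^{(1)}$ can then badly distort another element $g_\beta$ that punctures $\Sigma_{g_\alpha}^{(k)}$. To reorganize the spheres into a nested hierarchy in which the squeezes compose cleanly, one needs a codimension-one general-position tool for $(n-1)$-spheres in $E^n$; the Mismatch Property supplies exactly such a tool when $n\geq 4$, and this is the route taken in the present paper. In dimension three the same reorganization follows from the polyhedral Schoenflies theorem and is carried out by Woodruff. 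In higher dimensions, removing the Mismatch hypothesis would require a substantially stronger general-position result for $(n-1)$-spheres, and I expect this to be where any attempted proof of the full conjecture would stall.
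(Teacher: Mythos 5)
The statement here is Daverman's open conjecture, which the paper explicitly does \emph{not} prove; it only establishes a weakened form in which every sphere frontier additionally satisfies the Mismatch Property. You correctly stop short of claiming a proof, so there is no false claim, but your sketch misplaces the difficulty. The sentence about constructing ``an auto-homeomorphism supported in $U_g^{(1)}$ that squeezes $g$ into $U_g^{(K)}$'' hides the entire problem: when $\Sigma_g^{(k)}$ is a wild $(n-1)$-sphere there is no product structure between consecutive levels along which to shrink, and producing this single local squeeze is precisely the content of the paper's Lemma~1. The paper obtains it not from a nested tower at all but by re-embedding $C$ and $\tilde C=\mathrm{Cl}(S^n-C)$ separately via Daverman's reembedding theorem so that each becomes small and a closed $n$-cell-complement, and then shrinking the fiber arcs of the resulting annulus $A$ between the two reembedded copies of $\mathrm{Bd}\,C$.

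You also misdescribe the obstruction and the role of the Mismatch Property. The paper's stated obstacle is the existence of $(n-1)$-spheres in $S^n$ all of whose disks are wild, which defeats any cellular-subdivision shrinking scheme on a single sphere; it is not primarily about distinct spheres intersecting one another. Correspondingly, the Mismatch Property is not a codimension-one general-position or ``hierarchy reorganization'' tool. It is a homotopy condition on a single sphere --- disjoint $F_\sigma$-sets $F_0,F_1$ in the two crumpled-cube boundaries with $F_i\cup\mathrm{Int}\,C_i$ being $1$-ULC --- and its sole use is to enable Daverman's controlled arc-shrinking (Lemmas~3 and~4) inside the annulus $A$. Overlap of the covering crumpled cubes $C_1,\dots,C_k$ is handled in Proof~2 purely by the diameter-growth estimate $(c_i)$ for successive homeomorphisms, with no general-position input at all. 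So while your intuition that the hypotheses as stated are insufficient for $n\geq 4$ is sound, the reason they are insufficient, and what the Mismatch Property actually buys, are both different from what you describe.
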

What makes the proof of this conjecture so challenging is the existence of $(n-1)$-spheres in $S^n$ in which all disks are wild [5]. That means a failure will be inevitable if one uses a typical shrinking approach---dividing the spheres into cellular subdivisons. Here we treat an important special case of the conjecture, which one can argue is a genuine analog of the three dimensional result depicted in the next paragraph. That is, the conjecture will be true if the Mismatch Property is satisfied. It turns out to be our main theorem in section 3. In this section, readers may find out that even though in such a weak form, the proof is far from straightfoward. 

The strategy of the proof is greatly inspired by Woodruff [12] and Eaton [8]. However, some of their methods only apply in the 3-dimensional case. Therefore, for $n \geq 4$, their techniques have to be modified and extended. In section 5, Eaton's shrinking process is modified into a more manageable way which is an application of Daverman's proof of Mismatch Theorem [2, 6]. Then Woodruff's Lemma 1 and 2 [12] can be extended. That means the generalization of her Theorem 2 [12] will be executable. So section 5 is the main part of our proof.

Some counterexamples given by Daverman [6] show that $(n-1)$-spheres in $S^n$, unlike those in $S^3$, do not necessarily satisfy the Mismatch Property. 

The author wishes to thank Robert J. Daverman for many very helpful discussions and suggesting improvements in the preliminary version.


\section{Definitions and Notations}
All decompositions used in this paper are upper semicontinuous (u.s.c) defined by Daverman [3, P. 8]. For a decomposition $G$ of $S^n (n \geq 4)$, the set of nondegenerate elements is denoted by $H_G$, and the natural projection of $S^n$ onto $S^n/G$ by $\pi$. A subset $X\subset S^n$ is $saturated$ (or \textit{G-saturated}) if $\pi^{-1}(\pi(X))=X$. 

The symbol $\rho$ is used to denote the distance fixed on $S^n$ between sets $A$ and $B$ as $\rho(A,B)$; for the closure of $A$ we use $\text{Cl }A$; the boundary, interior and exterior of $A$ are denoted as $\text{Bd }A$, $\text{Int }A$ and $\text{Ext }A$; the symbol $\mathbbm{1}$ is the identity map and, for $A \subset X$, $\mathbbm{1}|_A$ is used to denote the inclusion of $A$ in $X$. For $A\subset S^n$ and $\varepsilon>0$, an embedding $h$ of $A$ in $S^n$ is an $\varepsilon$\textit{-homeomorphism} if and only if $\rho(h, \mathbbm{1}|A)<\varepsilon$ [4].
For a collection $H$, let $H^{*}=\{x\in g:g\in H\}$.

A $crumpled$ $n$-$cube$ $C$ is a space homeomorphic to the union of an ($n-1$)-sphere in $S^n$ and one of its complementary domains; the subset of $C$ consisting of those points at which $C$ is an $n$-manifold (without boundary) is called the $interior$ $of$ $C$, written as Int $C$, and the subset $C-\text{Int }C$, which corresponds to the given ($n-1$)-sphere, is called the $boundary$ $of$ $C$, written as $\text{Bd }C$. A crumpled $n$-cube $C$ is a $closed$ $n$-$cell$-$complement$ if there exists an embedding $h$ of $C$ in $S^n$ such that $S^n-h(\text{Int }C)$ is an $n$-cell.

For $n=3$, it has been shown by Hosay [10] and Lininger [11] that each crumpled cube $C$ can be embedded in $S^3$ so that Cl$(S^3-C)$ is a 3-cell. In the case of $n\geq 4$, the same claim is proved by Daverman [4].

Let $A$ be an annulus bounded by ($n-1$)-spheres $\Sigma_1$ and $\Sigma_2$. A homeomorphism $\varphi$ taking $\Sigma_1$ onto $\Sigma_2$ is called $admissible$ if there exists a homotopy $H:$ $S^{n-1} \times I \rightarrow A$ such that $H(S^{n-1} \times 0) = \Sigma _1$; $H(S^{n-1} \times 1)=\Sigma_2$; and for $x \in S^{n-1}$ if $H(x \times 0)=p \in \Sigma_1$, then $H(x \times 1)=\varphi(p) \in \Sigma_2.$
\section{Mismatch Property and Shrinkability}
Before stating our theorem, we should give the definition of Mismatch Property for $(n-1)$-spheres in the high dimensional case $(n\geq 4)$,
\begin{definition}[Mismatch Property]
An $(n-1)$-sphere in $S^n$ has the \textit{Mismatch Property} if and only if it bounds two crumpled $n$-cubes $C_0$ and $C_1$, and there exists $F_\sigma$-sets $F_i$ in $\text{Bd } C_i$ such that $F_i \cup \text{Int }C_i$ is 1-ULC, $i=0,1$, and $F_0\cap F_1=\emptyset$.

\end{definition}
For a u.s.c. decomposition $G$ of $S^n$ and an $(n-1)$-sphere
$\Sigma \in S^n/G$ that misses $\pi(H_G)$, we say that $\Sigma$ satisfies the
Mismatch Property if $\pi^{-1}(\Sigma)$ does. Then the main theorem can be stated as below,

\begin{theorem*}
Suppose $G$ is a u.s.c. decomposition of $S^n$ such that for any $p\in \pi(H_G)$ and open set $U$ containing $p$ there is an open set $V$ such that $p\in V \subset U$ and $\text{Bd }V$ is an $(n-1)$-sphere which misses $\pi(H_G)$ and satisfies the Mismatch Property. Then $G$ is shrinkable and $S^n/G$ is homeomorphic to $S^n$.
\end{theorem*}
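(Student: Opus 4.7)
The plan is to verify the standard Bing shrinking criterion: given any open cover $\mathcal{U}$ of $S^n/G$ and any $\varepsilon>0$, I will produce a homeomorphism $h\colon S^n\to S^n$ such that $\operatorname{diam} h(g)<\varepsilon$ for every $g\in H_G$ and such that each pair $\{x,h(x)\}$ lies in a single set of $\pi^{-1}(\mathcal{U})$. Once this is achieved, $\pi$ is approximable by homeomorphisms, so $G$ is shrinkable and $S^n/G\approx S^n$ simultaneously.

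First I would harvest the hypothesis. For each $p\in\pi(H_G)$, fix a nested neighborhood basis $\{V_p^k\}_{k\ge 1}$ at $p$ whose frontiers $\Sigma_p^k=\text{Bd } V_p^k$ are $(n-1)$-spheres in $S^n/G$ that miss $\pi(H_G)$ and satisfy the Mismatch Property. Pulling back by $\pi$, each $\pi^{-1}(\Sigma_p^k)$ is an $(n-1)$-sphere in $S^n$ that avoids $H_G^{*}$ and bounds two crumpled $n$-cubes $C_0^{p,k}$, $C_1^{p,k}$ equipped with disjoint $F_\sigma$-sets $F_0^{p,k}$, $F_1^{p,k}$ for which $F_i^{p,k}\cup\text{Int } C_i^{p,k}$ is $1$-ULC. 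Since the decomposition is u.s.c., for any prescribed $\delta>0$ the elements of $H_G$ with diameter at least $\delta$ have compact saturation, so at each stage of the construction I can extract a finite pairwise disjoint collection of such sphere neighborhoods that covers every \emph{still-large} element of $H_G$.

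Next I would carry out the shrinking inside one such sphere neighborhood. Using the generalizations of Woodruff's Lemmas 1 and 2 advertised in Section 5, together with Daverman's theorem that each crumpled $n$-cube is a closed $n$-cell complement, I expect to reembed $C_0$ and $C_1$ so that their interior collars match up across the separating sphere via an admissible homeomorphism in the sense of Section 2. Composing these reembedded collars gives a self-homeomorphism of $S^n$ supported in a thin shell about $\pi^{-1}(\Sigma_p^k)$ that pushes the saturated set $\pi^{-1}(V_p^k)$ into a much smaller saturated set, without disturbing anything outside. Iterating along the nested bases $\{V_p^k\}$ and composing the resulting homeomorphisms in the usual Brown--Bing manner produces a Cauchy sequence whose limit realizes the desired simultaneous shrinking, while by construction it respects the control imposed by $\pi^{-1}(\mathcal{U})$.

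The principal obstacle lies in the matching step: ensuring that the two reembeddings of $C_0$ and $C_1$ piece together compatibly along the wild separating sphere. Because that sphere can be wild on both sides---indeed every disk on it may be wild, per the examples cited in [5]---the classical strategy of cellulating the sphere and matching cell-by-cell is unavailable, which is precisely why Woodruff's and Eaton's three-dimensional arguments must be replaced. The Mismatch Property is exactly what restores a substitute: the disjoint $F_\sigma$-sets $F_0,F_1$ supply complementary $1$-ULC structure on the two sides, and feeding this into Daverman's Mismatch Theorem [2, 6] should yield approximating collar homeomorphisms that agree admissibly on the sphere. Engineering this matching carefully, while simultaneously keeping the support inside the prescribed saturated open set and enforcing convergence at each stage of the nested bases, is where the bulk of the technical work---and the main departure from the $n=3$ arguments---will occur.
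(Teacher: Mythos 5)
Your high-level strategy---verify a shrinking criterion by covering the large elements with the hypothesized sphere neighborhoods, then shrink inside each neighborhood by reembedding the bounding crumpled cubes and matching them across the wild separating sphere via the Mismatch Property---is the same route the paper takes.  You also correctly identify the central obstacle (no tame cellulation of the separating sphere) and the role of the disjoint $F_\sigma$-sets.  However, there are two concrete gaps in the way you propose to carry this out.

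\textbf{The ``thin-shell'' description of the local shrink is wrong.}  You say the matching of reembeddings yields a homeomorphism ``supported in a thin shell about $\pi^{-1}(\Sigma_p^k)$ that pushes the saturated set $\pi^{-1}(V_p^k)$ into a much smaller saturated set.''  A homeomorphism of $S^n$ supported in a collar about the boundary sphere of a crumpled cube $C$ necessarily fixes all of $\mathrm{Int}\,C$ away from that collar, so it cannot compress $C$ to small diameter.  The actual mechanism (the paper's Lemma~1 and its proof) is supported on a full neighborhood $N(C,\varepsilon)$ of the crumpled cube: one reembeds $C$ via Daverman's theorem so that $S^n-h_C(\mathrm{Int}\,C)$ is an $n$-cell, shrinks the reembedded copy inside that complementary cell, reembeds the other side $\tilde C$ close to the identity, and then uses the Mismatch structure (via the annulus-collapsing map $\alpha$) to recombine the two pictures into a self-homeomorphism of $S^n$.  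The motion on $C$ is large by design; the motion on $\tilde C$ is the small one.  Your description collapses this into a collar push that does not exist for wild crumpled cubes.

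\textbf{Disjoint covers are not available, and the missing ingredient is the controlled-growth estimate.}  You propose extracting ``a finite pairwise disjoint collection'' of sphere neighborhoods covering the still-large elements.  In general the large elements of a u.s.c.\ decomposition can form a connected (or otherwise badly accumulating) family in the hyperspace, in which case no finite disjoint family of saturated open sets can cover them.  The paper deliberately allows the finite cover $\{C_1,\dots,C_k\}$ to overlap and instead builds into Lemma~1 the semi-controlled growth bound $\mathrm{Diam}\,h(g)<\varepsilon+\mathrm{Diam}\,g$ for \emph{every} $g\in G$, not just those inside $C$.  The iteration then allocates a geometric budget $\varepsilon/2^i$ to the $i$-th stage, so that an element shrunk at stage $i$ can grow by at most $\sum_{m>i}\varepsilon/2^m<\varepsilon/2^i$ in later stages, keeping the final diameter below $\varepsilon$.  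Without this bound, or without genuine disjointness (which you cannot guarantee), the overlapping stages could re-inflate previously shrunk elements and the argument collapses.  Your outline needs to either prove that disjoint saturated sphere-neighborhood covers always exist (they don't) or incorporate the controlled-growth conclusion of Lemma~1 and the $\varepsilon/2^i$ bookkeeping that the paper uses.
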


The existence of a homeomorphism between $S^n$ and $S^n/G$ can be obtained by generalized Bing's fundamental shrinking theorem [3, P. 23].
\section{Generalization of Woodruff's Lemma and Proof of Theorem}
In this section, the main theorem will be reduced to Lemma 1, which is similar to Woodruff's Lemma 1 ($n = 3$), but generalized to the $n$-sphere ($n\geq 4$).
The proof of Lemma 1 will be given in section 5.
\begin{lemma}
Suppose $G$ is a u.s.c. decomposition of $S^n$, $\varepsilon >0$, $C$ is a crumpled $n$-cube in $S^n$ with $\text{Bd } C$ an $(n-1)$-sphere which fails to meet any nondegenerate element of $G$, and satisfies the Mismatch Property. Then there exists a homeomorphism $h: S^n \rightarrow S^n$ such that
\begin{itemize}
\item[(1)] $h|S^n-N(C,\varepsilon)=\mathbbm{1}$,
\item[(2)] if $g\in G$ and $g\subset C$, $\mathrm{Diam}$ $h(g)<\varepsilon$, and
\item[(3)] if $g\in G$, then $\mathrm{Diam}$ $h(g)<\varepsilon + \mathrm{Diam}$ $g$.
\end{itemize}
\end{lemma}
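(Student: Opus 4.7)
The plan is to adapt the shrinking argument used by Eaton and by Woodruff in dimension $n=3$ to dimension $n \geq 4$, using the Mismatch Property of $\text{Bd }C$ as a substitute for facts about $2$-spheres that are automatic in $S^3$. The homeomorphism $h$ will be constructed in two stages: first, a bicollared $(n-1)$-sphere $\Sigma$ is produced inside $C$ very close to $\text{Bd }C$, cutting $C$ into an $n$-cell $B$ and a thin annular shell $A$; then a shrinking homeomorphism supported in $B$, in $A$, and in a collar of $\text{Bd }C$ in $\text{Ext }C$ shrinks every element $g \subset C$ to diameter less than $\varepsilon$.

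First I would produce the bicollared sphere. Let $C_0 = C$, $C_1 = \text{Cl}(S^n \setminus \text{Int }C)$, and let $F_0, F_1$ be the disjoint $F_\sigma$-subsets of $\text{Bd }C$ supplied by the Mismatch Property, so that $F_i \cup \text{Int }C_i$ is 1-ULC for $i=0,1$. Feeding the 1-ULC condition on $F_0 \cup \text{Int }C$ into Daverman's approximation engine from his proof of the Mismatch Theorem [2, 6] yields a bicollared $(n-1)$-sphere $\Sigma \subset \text{Int }C$ that is arbitrarily close to $\text{Bd }C$ in Hausdorff distance, is admissible with $\text{Bd }C$, and, after a general-position adjustment against the closed set $H_G^* \cap C$, misses every nondegenerate element of $G$. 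By the Generalized Schoenflies Theorem the bicollared $\Sigma$ bounds an $n$-cell $B$ on the inward side, while $A = C \setminus \text{Int }B$ is a thin shell between $\Sigma$ and $\text{Bd }C$; choosing an outer bicollared sphere in $\text{Ext }C \cap N(C, \varepsilon)$ enlarges $A$ to an annular region $A' \subset N(C, \varepsilon)$ with both boundary spheres bicollared.

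Second I would shrink elements of $G$ inside $B$ and inside $A'$. Because $\text{Bd }C$ and $\Sigma$ both miss $H_G^*$, every $g \subset C$ lies in $\text{Int }B$ or in $\text{Int }A$. Upper semicontinuity of $G$ forces only finitely many elements $g_1, \ldots, g_k$ to have diameter exceeding $\varepsilon/2$. Inside the tame $n$-cell $B$, a finite sequence of self-homeomorphisms of $B$ fixing $\partial B = \Sigma$ can push each such $g_i \subset B$ into a pairwise disjoint open ball of diameter less than $\varepsilon$, while (again by u.s.c.) leaving the remaining small elements displaced by less than $\varepsilon$. A parallel push inside $\text{Int }A'$, exploiting the product structure given by the bicollars, handles those $g_i \subset A$. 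Concatenating these homeomorphisms produces $h$: condition (1) is immediate from the supports, (2) follows from the explicit small balls containing $h(g)$, and (3) holds because every $g$ is displaced by at most the support radius $\varepsilon$.

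The main obstacle is the first stage, producing the bicollared approximation $\Sigma$. In dimensions $n \geq 4$ the wild $(n-1)$-spheres of [5] whose disks are all wild rule out any cellular-subdivision attack; the Mismatch Property is precisely what enables Daverman's approximation lemma to supply such a $\Sigma$, and simultaneously enforcing bicollaredness, closeness to $\text{Bd }C$, and disjointness from $H_G^*$ is exactly the generalization of Woodruff's Lemmas~1 and~2 promised in the introduction and slated for Section~5.
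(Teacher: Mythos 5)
Your second stage is circular: the assertion that homeomorphisms of the tame $n$-cell $B$ fixing $\partial B$ ``can push each such $g_i \subset B$ into a pairwise disjoint open ball of diameter less than $\varepsilon$'' is precisely the shrinkability statement Lemma~1 is meant to establish, not a consequence of $B$ being a cell. Nothing about $B$ being a standard $n$-cell tames the decomposition restricted to $B$: an element $g_i$ can be just as wild inside $B$ as it was in $C$, and the elements of $G$ are not assumed cellular a priori. The observation that only finitely many elements exceed $\varepsilon/2$ in diameter is correct but does not help, since shrinking one element can enlarge another, and controlling that interaction is the whole difficulty. There is also a secondary gap in stage~1: a bicollared sphere $\Sigma$ near $\text{Bd }C$ cannot in general be adjusted to miss $H_G^*$ by general position, since $H_G^*$ may be dense in $\text{Int }C$; the hypothesis only grants that $\text{Bd }C$ itself misses $H_G^*$.

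The paper's proof sidesteps both problems by never shrinking individual elements inside $C$. It reembeds $C$ via Daverman's Reembedding Theorem so that $S^n - h_C(\text{Int }C)$ is an $n$-cell, then conjugates by a homeomorphism $\theta$ that squeezes the \emph{entire set} $\theta h_C(C)$ to diameter less than $\varepsilon/2$; every $g \subset C$ is then automatically small, with no element-by-element shrinking. It next reembeds $\tilde{C} = \text{Cl}(S^n - C)$ by a small motion $h_{\tilde{C}}$ disjointing $h_{\tilde{C}}(\tilde{C})$ from $\theta h_C(C)$, so that an annulus $A$ lies between the two reembedded crumpled cubes, and then invokes the Mismatch Property---through Lemma~2, which performs an $\varepsilon$-controlled collapse of $A$ along its fibers---to reassemble the sphere. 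The final $h$ is $\alpha\theta h_C$ on $C$ and $\alpha h_{\tilde{C}}$ on $\tilde{C}$. So the Mismatch Property enters in the annulus-collapse step in $\text{Ext}$ of the shrunken $C$, not, as in your plan, in producing an internal bicollared approximation to $\text{Bd }C$.
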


Next, we shall show the main theorem can be proved in three ways by applying Lemma 1. 
\begin{proof}[Proof 1]
One can imitate the strategy used by Woodruff in proving her Theorem 2 [12]. Her proof will work for our situation with the simple replacement of reference to 2-spheres in $S^n$ by $(n-1)$-spheres in $S^n$ that satisfy the Mismatch Property. The reader who understands that argument should be able to fill the details required.
\end{proof}
\begin{proof}[Proof 2] This proof depends on the shrinkability criterion below in the compact metric case which is due to Edwards [9]:
\begin{definition}[Compact Metric Shrinkability Criterion]
$G$ is \textit{shrinkable} if for each $\varepsilon>0$, there exists a homeomorphism $h$ of compact metric space $S$ onto itself satisfying: (1) $\rho(\pi(s),\pi(h(s)))<\varepsilon$ for each $s\in S$, and (2) $\text{Diam } h(g)<\varepsilon$ for each $g\in G$.
\end{definition}
Let $W$ be an open set containing $\pi(H_{G})$; given a homeomorphism $\phi$ of $\pi^{-1}(W)$ onto itself; $C_0$ is a crumpled $n$-cube satisfying the hypotheses in Lemma 1, and $\varepsilon >0$ such that $\text{Cl }N(C_0,\varepsilon)$ is compact and contained in $W$. We can employ the uniform continuity of $\phi|\text{Cl }N(C_0,\varepsilon)$ to obtain a homeomorphism $\phi'=\phi h$ of $\pi^{-1}(W)$ satisfying:
\begin{itemize}
\item[(a$_0$)] $\phi'|\pi^{-1}(W)-N(C_0,\varepsilon)=\phi|\pi^{-1}(W)-N(C_0,\varepsilon)$,
\item[(b$_0$)] if $g\in G$ and $g\subset C_0$, $\text{Diam }\phi'(g)<\varepsilon$,
\item[(c$_0$)] if $g\in G$, then $\text{Diam }\phi'(g)<\varepsilon+\text{Diam }\phi(g)$.
\end{itemize}
Then we can use this modified conclusions in Lemma 1 to prove the main theorem.

For $g\in H_G$, by the fact that $W$ contains $\pi(H_G)$, and the hypothesis of the main theorem, there exists an open set $V_g$ such that $\pi(g)\in V_g \subset W$, $\text{Cl }V_g \subset W$, $\text{Bd }V_g$ is an $(n-1)$-sphere, $\text{Bd }V_g \cap \pi(H_G)=\emptyset$ and satisfies the Mismatch Property. So $\pi^{-1}(V_g)$ is saturated open set containing $g$, and $\pi^{-1}(\text{Bd }V_g)\subset \pi^{-1}(W)$ is an $(n-1)$-sphere which misses $H_G$. 

Let $\varepsilon>0$. Define a collection $H_L=\{g\in H_G: \text{Diam }g\geq \varepsilon/2\}$. Take a cover $\mathscr{U}$ of it by saturated sets of the form $\pi^{-1}(V_g)$. By the compactness of $H_{L}^*$, we are enumerating the finite cover $\{U_1,U_2,\cdots,U_k\}$ from $\mathscr{U}$. Denote the crumpled $n$-cube $\text{Cl }U_i$ as $C_i$ $(i=1,\dots,k)$. We assume that $\varepsilon$ is so small that $N(C_i,\varepsilon) \subset \pi^{-1}(W)$. Then we shall produce a shrinking homeomorphism $h_k$ at the end of finite sequence $h_0=\mathbbm{1},h_1,h_2,\dots,h_k$ of successive homeomorphisms $h_i|\pi^{-1}(W)-N(C_i,\varepsilon)=h_{i-1}|\pi^{-1}(W)-N(C_i,\varepsilon)$ $(i=1,\dots,k)$, which will satisfies the shrinkability criterion for compact metric space.

By hypothesis, there exists a homeomorphism $h_1$ of $S^n$ onto itself such that 
\begin{itemize}
\item[(a$_1$)] $h_1|\pi^{-1}(W)-N(C_1,\varepsilon)=\mathbbm{1}$,
\item[(b$_1$)] if $g\in G$ and $g\subset C_1$, $\text{Diam }h_1(g)<\varepsilon/4$,
\item[(c$_1$)] if $g\in G$, then $\text{Diam }h_1(g)<\varepsilon/4+\text{Diam }g$.
\end{itemize}

Recursively, $h_0,h_1,\dots,h_i$ and $C_1,\dots,C_i$ can be constructed subject to the 3 conditions below,
\begin{itemize}
\item[(a$_i$)] $h_i|\pi^{-1}(W)-N(C_i,\varepsilon)=h_{i-1}|\pi^{-1}(W)-N(C_i,\varepsilon)$,
\item[(b$_i$)] if $g\in G$ and $h_{i-1}(g)\subset C_i$, $\text{Diam }h_i(g)<\varepsilon/2^i$,
\item[(c$_i$)] if $g\in G$, then $\text{Diam }h_i(g)<(\varepsilon/2^i)+\text{Diam }h_{i-1}(g)$.
\end{itemize}
For each $g\in H_L$, there exists an index $i$ with $g\in C_i$, then $\text{Diam }h_i(g)<\varepsilon/2^i$ by (b$_i$) above, and by (c$_i$), for $j=1,\dots,k-i$, 
\begin{equation}
\text{Diam }h_{i+j}(g)<\sum\limits_{m=1}^{j}(\varepsilon/2^{i+m})+\text{Diam }h_i(g)<\sum\limits_{m=0}^{j}(\varepsilon/2^{i+m})<\varepsilon.
\end{equation}
Similarly, for $g\in H_G-H_L$, one can provide a similar bound. These arguments imply the final homeomorphism $h_k$ shrinks all the elements $G$ to $\varepsilon$-small size.

In addition, because $\pi h_i, \pi h_{i-1}$ agree outside of $N(C_i,\varepsilon)$ and because $\pi h_{i-1}(\pi h_i)^{-1}(N(C_i,\varepsilon))=N(C_i,\varepsilon)$, we have 
\begin{equation}
\rho(\pi h_{i-1},\pi h_i)<\varepsilon/2^i.
\end{equation}
Hence, we have $\pi h_k$ and $\pi$ are $\varepsilon$-close.
\end{proof}
\begin{proof}[Proof 3] We say $G$ is \textit{locally semi-controlled shrinkable} [7] if to every $g_0\in G$ and neighborhood $U_0$ of $g_0$ there corresponds a neighborhood $W_0 \subset U_0$ of $g_0$ such that for every $\varepsilon>0$ and homeomorphism $h:S \rightarrow S$, where $S$ is a metric space, and there exists another homeomorphism $h':S \rightarrow S$ satisfying:
\begin{itemize}
\item[(1)] $h'$ and $h$ coincide on $S-U_0$,
\item[(2)] $\text{Diam }h'(g)<\varepsilon$ for all $g\in G$ with $g\subset W_0$, and 
\item[(3)] $\text{Diam }h'(g')<\varepsilon+\text{Diam }h(g')$ for all $g'\in G$.
\end{itemize}
Lemma 1 immediately implies that the decomposition $G$ has this feature. By Daverman-Repov\v{s}'s Theorem 1.7 [7], $G$ is shrinkable.
\end{proof}
\section{Proof of Lemma 1}
As we have shown, the main theorem follows from Lemma 1. We now consider its proof.
\begin{proof}[Proof of Lemma 1]
Let $S^n$ be the union of crumpled $n$-cubes $C$ and $\tilde{C}=\text{Cl }(S^n-C)$. This can be proved by definition and Jordan-Brouwer Separation Theorem. See Figure 1.
\begin{figure}[h!]
\begin{center}
\begin{tikzpicture}
  \matrix (m) [matrix of math nodes,row sep=3em,column sep=0.5em,minimum width=2em] {
    S^n= & C& \bigcup & \tilde{C}& & & & & \\
     & S^n& & & & & & &\\
     & & & S^n& & & & &S^n\\};
  \path[-stealth]
    (m-1-2) edge node [left] {$h_C$} (m-2-2)
    (m-1-4) edge node [right] {$h_{\tilde{C}}$} (m-3-4)
    (m-3-4) edge node [above] {$\alpha$} (m-3-9)
    (m-2-2) edge node [below left, pos=.5] {$\theta$} (m-3-4);
\end{tikzpicture}
\end{center}
             \caption{}
             \label{fig:1}
\end{figure}
The map $h_C$ is a reembedding of $C$ in $S^n$ ($n\geq 4$). It's given by Daverman's Reembedding Theorem 6.1 [4], which states: \textit{Let $C$ denote a crumpled n-cube in $S^n$ $(n\geq 4)$. For each $\delta >0$, there exists an embedding $h$ of $C$ in $S^n$ such that $\rho(h,\mathbbm{1}|C) < \delta $ and $S^n-h(\text{Int } C)$ is an n-cell.}

Let $p\in \text{Int }C$ and require that $h_C$ is $\mathbbm{1}$ on a small neighborhood of $p$. Let $\theta $ be a homeomorphism of $S^n$ onto itself taking $h_C(C)$ to a set of diameter less than 
\begin{equation*}
\min\{\varepsilon/2, \rho(p,\text{Bd }C)/2\},
\end{equation*}
and not moving points in a small neighborhood of $p$. Note that the elements of $\theta (G)$ in $\theta h_{\tilde{C}}$ are $(\varepsilon/2)$-small.

Next, we need apply Daverman's Reembedding Theorem to $\tilde{C}$ to get the reembedding $h_{\tilde{C}}$ of $\tilde{C}$ in $S^n$. Choose a $\delta$ which is less than
\begin{equation*}
\min\{\varepsilon/2, \rho(\tilde{C}, \theta h_C(C))\}.
\end{equation*}
Let $h_{\tilde{C}}$ be $\mathbbm{1}$ on $S^n-N(C,\varepsilon)$. The $\varepsilon/2$ condition assures that the diameters of elements of $G$ in $\tilde{C}$ grow by no more than $\varepsilon$ under the action of $h_{\tilde{C}}$, i.e. if $q\in \tilde{C}$, $\rho(q,h_{\tilde{C}}(q))<\delta$. Meanwhile, the condition on $\delta$ successfully guarantees $\theta h_C(C) \cap h_{\tilde{C}}(\tilde{C})=\emptyset$.

The above motion controls imply that $\theta h_C(C)$ and $h_{\tilde{C}}(\tilde{C})$ are disjoint crumpled cubes in $S^n$, and the closure of the complement of each is $n$-cell. Denote Cl$(S^n-\theta h_C(C)-h_{\tilde{C}}(\tilde{C}))$ by $A$. To complete the proof of Lemma 1, Lemma 2 will be applied, which is stated below.
\end{proof}
\begin{lemma}
Suppose that in $S^n$ an annulus $A$ is bounded by $(n-1)$-spheres $\Sigma$ and $\varphi(\Sigma)$, where $\varphi$ is an admissible homeomorphism on $\Sigma$; $U$ is an open set containing $A$; and $\varepsilon >0$. Furthermore, $G$ is a u.s.c. decomposition of $S^n$; $F_1$ and $F_2$ are $F_\sigma$-sets in $\Sigma$ such that $F_1\cup \varphi(F_2)\cup \mathrm{Ext}$ $A$ is $1$-ULC; $A \cap \pi(H_{G}^{*}) = \emptyset$. Then there exists a map $\alpha:S^n \rightarrow S^n$,
\begin{itemize}
\item[(1)]$\alpha|S^n-U=\mathbbm{1}$,
\item[(2)]$\alpha|S^n-A$ is a homeomorphism onto $S^n-\alpha(A)$,
\item[(3)]$\alpha|\Sigma = \alpha|\varphi(\Sigma)$ and $\alpha\varphi$ is a homeomorphism onto $\alpha(A)$ and 
\item[(4)] for $g\in H_G$, $\text{Diam }\alpha(g)<\varepsilon+\text{Diam }g$.
\end{itemize}
\end{lemma}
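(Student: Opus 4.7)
The plan is to realize $\alpha$ as the uniform limit of a Cauchy sequence of homeomorphisms $\alpha_k:S^n\to S^n$, each the identity on $S^n\setminus U$, whose cumulative effect is precisely to collapse the annulus $A$ along the fibers of the admissible homotopy. Let $H:S^{n-1}\times I\to A$ be the homotopy guaranteed by admissibility, so that the arcs $H(\{x\}\times I)$ connect $x\in\Sigma$ to $\varphi(x)\in\varphi(\Sigma)$. Success means that in the limit each such fiber is crushed to a single point while no further identifications occur off $A$. This is an annulus analogue of Daverman's Mismatch Theorem [2,6], and the same exhaustion-and-approximation machinery will power the proof.

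Write $F_1=\bigcup_j K_j$ and $F_2=\bigcup_j L_j$ as nested unions of compact sets, and choose nested open neighborhoods $U=U_0\supset U_1\supset U_2\supset\cdots$ of $A$ with $\bigcap_k U_k=A$. Inductively set $\alpha_k:=\beta_k\circ\alpha_{k-1}$ with $\alpha_0=\mathbbm{1}$, where $\beta_k$ is a homeomorphism supported in a tiny neighborhood of $K_k\cup\varphi(L_k)$ inside $U_k$. Each $\beta_k$ is produced by invoking the 1-ULC hypothesis on $F_1\cup\varphi(F_2)\cup\mathrm{Ext}\,A$ together with the bicollar structure provided by $H$, in the manner of Daverman's proof of the Mismatch Theorem, to drag points of $\Sigma$ along their annular fibers into proximity with their $\varphi$-images while leaving earlier matchings essentially undisturbed. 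Insisting that each $\beta_k$ move every point by less than $\varepsilon/2^{k+1}$ guarantees uniform convergence to a continuous $\alpha:S^n\to S^n$.

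The four conditions can then be checked as follows. Condition (1) is immediate since each $\beta_k$ is the identity off $U$. For (2), any $p\notin A$ lies outside $U_k$ for all $k$ past some $k_0$, so $\alpha$ coincides with the homeomorphism $\alpha_{k_0}$ on a neighborhood of $p$; since distinct fibers collapse to distinct image points, $\alpha|S^n\setminus A$ is an injective local homeomorphism onto $S^n\setminus\alpha(A)$. For (3), the cumulative pushing is engineered so that each fiber $H(\{x\}\times I)$ has image diameter tending to $0$, forcing $\alpha(x)=\alpha(\varphi(x))$ for every $x\in\Sigma$; injectivity of $\alpha\varphi$ on $\Sigma$ follows because the fibers are pairwise disjoint and retain this separation in the limit. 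For (4), every $g\in H_G$ is disjoint from $A$ and is moved only by the accumulated $\beta_k$-perturbations, whose total displacement on each point of $g$ is bounded by $\sum_k\varepsilon/2^{k+1}<\varepsilon/2$, yielding $\mathrm{Diam}\,\alpha(g)<\mathrm{Diam}\,g+\varepsilon$.

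The principal obstacle is the construction of the homeomorphisms $\beta_k$. One must use the 1-ULC condition on the \emph{disjoint} $F_\sigma$-sets $F_1$ and $\varphi(F_2)$ (together with $\mathrm{Ext}\,A$) to build each pushing isotopy so that it simultaneously (i) compresses the portion of the annulus over $K_k\cup\varphi(L_k)$ along its fibers, (ii) is supported in the preassigned shrinking neighborhood inside $U_k$, and (iii) respects the matchings already obtained at earlier stages. This is precisely the delicate construction carried out in Daverman's proof of the Mismatch Theorem, now adapted to an annular neighborhood rather than to the two sides of a single embedded $(n-1)$-sphere; verifying that it respects the fibered structure of $A$ supplied by the admissible homotopy is the technical heart of the argument.
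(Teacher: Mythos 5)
Your overall frame is right in spirit---realize $\alpha$ as a uniform limit of homeomorphisms that push along the annular fibers, with supports shrinking to $A$ and total motion summing to less than $\varepsilon$---and this is close in flavor to the paper's route, which parameterizes $A$ via $\omega:S^{n-1}\times I\to A$, proves that the decomposition $\mathcal{G}$ whose nondegenerate elements are the fibers $\omega(z\times I)$ is shrinkable, and then extracts $\alpha$ from general theory. But there is a genuine gap in your step where you build $\beta_k$ "supported in a tiny neighborhood of $K_k\cup\varphi(L_k)$." The sets $F_1$ and $F_2$ (hence their compacta $K_k$, $L_k$) need not be dense in $\Sigma$, nor need $F_1\cup F_2$ cover $S^{n-1}$: the Mismatch Property only requires $F_0\cap F_1=\emptyset$ and that adjoining $F_e$ to $\mathrm{Int}\,C_e$ gives a $1$-ULC set. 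So a sequence of pushes localized near $K_k\cup\varphi(L_k)$ will never touch the fibers over $S^{n-1}\setminus(F_1\cup F_2)$; those fibers stay long, your limiting map does not identify $x$ with $\varphi(x)$ there, and conclusion (3) fails.

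What handles this in the paper is the two-ended compression in its Lemma 4 (built from Daverman's Lemma 4.2). One first fixes a partition $P_k$ of $I$ so that each $\omega(z\times[t_{i-1},t_i])$ is $\varepsilon$-small. Using the $1$-ULC of $\mathrm{Int}\,C_0\cup\omega(F_0\times 0)$, a homeomorphism $f'$ squeezes the fibers over a compactum $X\subset F_1$ toward the $t_1$-level; a second homeomorphism $\phi^*$, supported away from $f'$'s support and from $\omega(S^{n-1}\times[0,t_{k-1}])$, pushes the fibers over $S^{n-1}-N^*$ (a complementary neighborhood) toward $t_{k-1}$ using the other $1$-ULC condition; and an interpolating reparametrization $\xi$ of $S^{n-1}\times I$ handles the transition region $N^*-X$. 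The composite $f=\phi^* f'$ together with $\xi$ reduces the effective length of \emph{every} fiber by one partition interval, not just those over $F_1\cup F_2$, while moving points $\delta$-parallel to fibers and staying the identity off a preassigned open set. Iterating $k-1$ times yields the $\varepsilon$-shrink, from which shrinkability of $\mathcal{G}$---and hence $\alpha$---follows. Your outline omits precisely this complementary push $\phi^*$ and interpolation $\xi$, which is the technical heart; without it you cannot conclude that the residual fibers collapse, and (3) is unproved. Your verifications of (1), (2), and (4) are otherwise in order and essentially match the paper's accounting.
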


Before proving Lemma 2, we show how to use it to complete the proof of Lemma 1.

The ($n-1$)-sphere $\Sigma$ in Lemma 2 is $\theta h_C(\text{Bd }C)$, and the homeomorphism $\varphi$ is $h_{\tilde{C}}h_{C}^{-1}\theta^{-1}$. Thus, $\varphi(\Sigma)$ is $h_{\tilde{C}}(\text{Bd }C)=h_{\tilde{C}}(\text{Bd }\tilde{C})$. Given $\eta>0$ such that for any $q,q'\in \tilde{C}$, we have $\rho(h_{\tilde{C}}(q),h_{\tilde{C}}(q'))<\eta$ implies that $\rho(q,q')<\varepsilon$. And $U$ can be defined by $\theta h_{C}(C)\cup A \cup N(h_{\tilde{C}}(\text{Bd }\tilde{C}),\eta)$. 

The boundary of $A$ is the two copies of $\text{Bd }C$ which are disjoint $(n-1)$-spheres. Since $\text{Cl }(S^n-\theta h_C(C))$ is an $n$-cell, its boundary is collared in the cell. Take a middle level of such a collar as $\Sigma_1$. Clearly, $\Sigma_1$ and the boundary of $\text{Cl }(S^n-\theta h_C(C))$ cobound an annulus. Similarly, we can find a $\Sigma_2$ such that $\Sigma_2$ and boundary of $n$-cell $\text{Cl }(S^n-h_{\tilde{C}}(\tilde{C}))$ containing $\Sigma_2$ cobound an annulus. Then $\Sigma_1$ and $\Sigma_2$ cobound an annulus by Annulus Theorem. This means that $A$ is the union of three annuli that fit together nicely.

Next, we shall show $h_{\tilde{C}}h_{C}^{-1}\theta^{-1}$ is admissible. By Bing's Theorem 2 [1], if a point $p$ in the interior of the crumpled $n$-cube $\theta h_C(C)$ is removed, then we can produce retractions of $\theta h_C(C)-\{p\}$ to $\theta h_C(\text{Bd C})$ and of $S^n-\{p\}$ to $\text{Bd }C$. Similarly, we can remove a point $q$ from $h_{\tilde{C}}(\tilde{C})$, and construct retractions of $h_{\tilde{C}}(\tilde{C})-\{q\}$ to $h_{\tilde{C}}(\text{Bd }C)$ and of $S^n-\{q\}$ to $\text{Bd }C$. Hence, in $S^n-{p}-{q}$ there is a short homotopy of $\theta h_C(\text{Bd } C)$ to $h_{\tilde{C}}(\text{Bd }C)$. Since there exists a retraction of $S^n-\{p\}-\{q\}$ goes to $A$, we can find a desired homotopy in $A$ between $\theta h_C|\text{Bd } C$ and $h_{\tilde{C}}|\text{Bd }C$.

By the Mismatch Property, there exist $F_\sigma$ sets $F_1'$ and $F_2'$ in $(n-1)$-sphere $\text{Bd }C$ such that $F_1'\cup \text{Int } C$ and $F_2'\cup \text{Int } \tilde{C}$ are 1-ULC. Hence, $\theta h_C(F_1')$ and $h_{\tilde{C}}(F_2')$ are the desired $F_\sigma$-sets. It follows the hypotheses of Lemma 2 have been satisfied. The conclusion states that a certain map $\alpha$ exists. Now the map
\begin{equation}
h(x)=\begin{cases}
\alpha h_{\tilde{C}} & \text{for } x\in \tilde{C},\\
\alpha\theta h_C & \text{for } x\in C,
\end{cases}
\end{equation}
and $h$ agrees on $C \cap \tilde{C}=\text{Bd }C=\text{Bd } \tilde{C}$ is the required homeomorphism in the conclusion of Lemma 1.

Woodruff's proof of her Lemma 2 relies largely on Eaton's method. Those methods in part depend upon a uniquely $3$-dimensional technique. Daverman [2] combined Eaton's strategy with high dimensional techniques to provide a fundamental shrinking process when the Mismatch Property applies. We will use his work in place of Eaton's.  

For greater clarity, we shall introduce a homeomorphism $\omega: \Sigma \times I \rightarrow A$ such that $\omega(z,0) =z$. Let $\gamma$ denote the homeomorphism from $\Sigma$ to $\varphi(\Sigma)$ given by $\gamma(z) = \omega(z,1)$. We would like to have $\gamma = \varphi$, and we show how to adjust $\omega$ to achieve this. Applying $\omega^{-1}$, projection $q:\Sigma \times I \rightarrow \Sigma$, and admissibility of $\varphi$, we can obtain a homotopy of $\Sigma$ between $\mathbbm{1}|\Sigma$ and $\gamma^{-1} \varphi$. So there exists an isotopy
$\beta_t$ of $\Sigma$ to itself with $\beta_0 =\mathbbm{1}$ and $\beta_1=\gamma^{-1} \varphi$. We can build a homeomorphism $\xi$ of $\Sigma\times I$ to itself given by $\xi(z,t)=(\beta_t(z),t)$. Then $\omega \xi (z,0) =\omega(\beta_0(z),0) = z$
and $\omega \xi(z,1) = \omega(\beta_1(z),1) =\omega(\gamma^{-1}\varphi(z),1)=\varphi(z)$.

Let $\omega\xi=\omega'$. For each $x\in \text{Bd }C$, there exists $z\in \Sigma$ such that 
\begin{equation}
\theta h_C(x)=\omega'(z,0) \text{ and } h_{\tilde{C}}(x)=\omega'(z,1).
\end{equation}

Note $C\cup \tilde{C}$ is topologically equivalent to the decomposition space obtained from the decomposition $\mathcal{G}$ whose only nondegenerate elements are the arcs $\omega(z\times I)$. Also, we say that a motion of a homeomorphism $f$ of $S^n$ to itself \textit{moves points $\delta$-parallel to the fibers}, if to each $s\in S^n$ for which $f(s)\neq s$ there corresponds $z\in S^{n-1}$ such that $\{s, f(s)\}\subset N(\omega(z\times I),\delta)$.

The idea behind our proof is based on Daverman's Controlled Shrinking Lemma [4] or Lemma 4.2 [6] which shows how to shrink one type of arcs in annulus $\omega(S^{n-1}\times I)$. For the convenience of readers, we spell out Lemma 4.2 below,
\begin{lemma}[Daverman's Lemma 4.2] 
Suppose $\omega$ is an embedding of $S^{n-1} \times I$ in $S^n$; $C_e$ is the closure of that component of $S^n-\omega(S^{n-1}\times e)$ not containing $\omega(S^{n-1}\times 1/2)$ $(e=0,1)$; $X$ is a compact subset of $S^{n-1}$ such that $\mathrm{Int}$ $C_0$ is 1-ULC in $C_0-\omega(X\times 0)$; $a\in (0,1]$; $U$ is an open subset of $S^{n-1}$ containing $\omega(X\times [0,a))$. Then for each $\delta>0$ there exists a homeomorphism $f$ of $S^n$ onto itself such that
\begin{itemize}
\item[(1)] $f|(S^n-U)\cup C_1 \cup \omega(S^{n-1}\times [a,1])=\mathbbm{1}$,
\item[(2)] $f\omega(x \times [0,a]) \subset N\omega((x,a),\delta)$, for each $x\in X$,
\item[(3)] $f$ moves points $\delta$-parallel to the fibers of $\omega(S^{n-1}\times I)$. 
\end{itemize}
In addition, if $P=\{t_0,t_1,\dots,t_k\}$ is a partition of $I$, with $0=t_0<t_1<\cdots<t_i<\cdots<t_k=1$, and if $\varepsilon$ is a positive number such that 
$\mathrm{Diam}$ $\omega(z\times [t_{i-1},t_i])<\varepsilon$ for each $z\in S^{n-1}$ and $i=1,\dots,k$, then there exists a homeomorphism $f$ such that
\begin{itemize}
\item[(4)] $\mathrm{Diam}$ $f\omega(z\times [t_{i-1},t_i])<\varepsilon$ for each $z\in S^{n-1}$ and $i=1,\dots,k$.
\end{itemize}
\end{lemma}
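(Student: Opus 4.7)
The plan is to construct $f$ as a finite composition $f=f_N\circ\cdots\circ f_1$ of small homeomorphisms of $S^n$, each supported in $U$ and each nudging the arcs $\omega(x\times[0,a])$ (for $x\in X$) closer to the target level $\omega(S^{n-1}\times\{a\})$ while staying close to the fiber through $\omega(x,a)$. The $1$-ULC hypothesis on $\text{Int}\,C_0$ in $C_0-\omega(X\times 0)$ is the engine of the argument: small loops lying near $\omega(X\times 0)$ bound small disks missing $\omega(X\times 0)$, and in dimension $n\geq 4$ one can promote such null-homotopies, by general position together with a Bing-shrinking device, to small isotopies that push the arcs off $\omega(X\times 0)$ without leaving narrow fibered neighborhoods.

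First I would choose a partition $0=a_0<a_1<\cdots<a_N=a$ refining the given $P$ and fine enough that each slab $\omega(S^{n-1}\times[a_{k-1},a_k])$ has fiber-diameter less than $\min(\delta,\varepsilon)/2$, cover the compact set $X$ by finitely many small open disks $D_1,\ldots,D_m\subset S^{n-1}$ so that each fibered box $\omega(D_j\times[0,a])$ lies in $U$ and has $\omega$-image of diameter less than $\delta/2$, and fix a summable sequence $\delta_k\downarrow 0$ with $\sum_k\delta_k<\delta$.

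Next I would define $f_k$ inductively by working one slab at a time. Assuming $f_{k-1}\circ\cdots\circ f_1$ has already pushed each arc $\omega(x\times[0,a_{k-1}])$ inside the $\delta_{k-1}$-neighborhood of $\omega(x\times\{a_{k-1}\})$, I would produce $f_k$ supported in a small fibered neighborhood of $\omega(X\times[a_{k-1},a_k])$ as follows: inside each box $\omega(D_j\times[a_{k-1},a_k])$, use the $1$-ULC hypothesis to construct a short null-homotopy carrying the relevant portion of the arcs into a neighborhood of $\omega(D_j\times\{a_k\})$; then promote this homotopy to a homeomorphism of $S^n$ supported in a slightly enlarged fibered box by a Bing-shrinking construction, carried out level-by-level so that the motion is $\delta_k$-parallel to the fibers of $\omega(S^{n-1}\times I)$.

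The composition $f=f_N\circ\cdots\circ f_1$ is the required homeomorphism: conclusion (1) holds because every $f_k$ is supported in $U$ and away from $C_1\cup\omega(S^{n-1}\times[a,1])$; conclusion (2) follows by telescoping the $\delta_k$ to a total motion of less than $\delta$; conclusion (3) is immediate from the narrowness of the fibered boxes in which each $f_k$ acts; and conclusion (4) is built into the choice of partition, since each $f_k$ is supported inside a single slab of $P$. The hard part will be the promotion step --- turning a $1$-ULC null-homotopy into a fiber-parallel homeomorphism with tight support --- which requires the controlled side-approximation/Bing-shrinking machinery and uses $n\geq 4$ in an essential way; without the extra dimension, general position alone would not allow the null-homotopies to be promoted to homeomorphisms close enough to the identity, which is exactly why the three-dimensional analogue needs Woodruff's separate geometric input.
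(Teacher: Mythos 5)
The paper does not actually prove this lemma. It is Lemma 4.2 from Daverman's unpublished manuscript \emph{Sewings of closed $n$-cell-complements} (reference [6]), and the paper merely restates it ``for the convenience of readers.'' So there is no in-paper argument to compare against, and the feedback below is an assessment of your sketch on its own terms.

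Your high-level plan---a finite telescoping composition $f=f_N\circ\cdots\circ f_1$ of small, fiber-parallel moves, each supported in a thin fibered box---is the right \emph{shape} for a controlled-shrinking argument, and the role you assign to the hypotheses (compactness of $X$ for finite covers, the partition for condition~(4), summable $\delta_k$ for condition~(2)) is correct. But two things keep this from being a proof. First, you misplace where the $1$-ULC hypothesis does work: the condition ``$\mathrm{Int}\,C_0$ is $1$-ULC in $C_0-\omega(X\times 0)$'' is about loops near the sphere $\omega(S^{n-1}\times 0)$ missing the wild set $\omega(X\times 0)$. It is the tool that lets you move the arc endpoints off that boundary sphere in the \emph{first} step. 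Once the arcs sit inside the open collar $\omega(S^{n-1}\times(0,1))$, which is bicollared and locally flat, the subsequent compressions from level $a_{k-1}$ to $a_k$ are standard collar pushes and do not need, and cannot sensibly invoke, the $1$-ULC hypothesis ``inside each box.'' Invoking it uniformly across slabs suggests a misreading of what the hypothesis constrains.

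Second, and more seriously, you explicitly flag the ``promotion step''---converting a small $1$-ULC null-homotopy into a fiber-parallel homeomorphism of $S^n$ with tight support---as ``the hard part,'' and then leave it untouched. That step is not a technicality: it \emph{is} the lemma. Making it precise in dimension $\geq 4$ requires the Stan'ko/Edwards-style machinery (radial engulfing, meshing of dual skeleta, controlled general position, and the cell-like approximation theorem), and these are exactly the ingredients your sketch postpones. Without them, nothing in your outline distinguishes the case where the arcs \emph{can} be compressed from the case where $\omega(X\times 0)$ is too wild for any ambient isotopy to work, so the argument as written does not yet establish the conclusion.
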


\begin{lemma} 
Suppose  $\omega:S^{n-1} \times I  \to S^n$, $C_e$ $(e = 0,1)$, $P=\{ t_0,t_1,\dots ,t_k \}$
and $\varepsilon > 0$ are as in Lemma 3. Suppose that $\mathrm{Diam}$ $\omega(z \times R) < \varepsilon$  for all $z \in S^{n-1}$ and subarcs  $R \subset [t_1,t_{k-2}]$ of diameter  $t_1$.
Suppose also (Mismatch Property) that there exist disjoint  $F_{\sigma}$-subsets
$F_0,F_1$ of $S^{n-1}$ such that  $\mathrm{Int}$ $C_e \cup \omega(F_e \times e)$ is 1-ULC $(e = 0,1)$.
Then for each open subset $V$ of $S^n$  containing  $\omega(S^{n-1} \times I)$ and each $\delta > 0$ 
there exist homeomorphisms  $f:S^n \to S^n$  and $\xi:S^{n-1} \times I \to S^{n-1} \times I$ such that 
\begin{itemize}
\item[(1)] $f | ( S^n - V) \cup \omega(S^{n-1} \times [t_1,t_{k-1}]) = \mathbbm{1}$,
\item[(2)] $f$ moves points $\delta$-parallel to the fibers of $\omega(S^{n-1}\times I)$,
\item[(3)] $\xi$ changes only the second coordinates of points in $S^{n-1} \times I$.
\end{itemize}
In addition, for each  $z \in S^{n-1}$, 
\begin{itemize}
\item[(4)] $\mathrm{Diam}$ $f \omega \xi(z \times [t_{k-2},t_k]) < \varepsilon$, 
\item[(5)] $\mathrm{Diam}$ $f \omega \xi(z \times [0,t_1]) < \varepsilon$ and
\item[(6)] for any subarc  $R \subset [t_1,t_{k-2}]$ of diameter $t_1$, $\mathrm{Diam}$ $f \omega  \xi(z \times R) <\varepsilon$.
\end{itemize}
\end{lemma}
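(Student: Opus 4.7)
The plan is to treat Lemma 4 as a ``two-ended'' strengthening of Lemma 3, obtained by iterating Lemma 3 at both $e=0$ and $e=1$ and absorbing the remaining end bands via a reparametrization $\xi$ of the second coordinate. The key observation about the Mismatch hypothesis is that $\mathrm{Int}\,C_e\cup\omega(F_e\times e)$ being $1$-ULC supplies the hypothesis of Lemma 3 at end $e$ for every compact $X\subset S^{n-1}$ disjoint from $F_e$: loops in $\mathrm{Int}\,C_e$ bound small disks in $\mathrm{Int}\,C_e\cup\omega(F_e\times e)\subset C_e-\omega(X\times e)$. Because $F_0\cap F_1=\emptyset$, the families of ``safe'' compact sets (disjoint from $F_0$ at end $0$, disjoint from $F_1$ at end $1$) jointly cover $S^{n-1}$, so every fiber is amenable to shrinking at at least one end.

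Concretely, I would first write $F_e=\bigcup_j K_j^e$ with $K_j^e\subset\mathrm{Int}\,K_{j+1}^e$ compact and choose disjoint open neighborhoods $W_j^0\supset K_j^0$ and $W_j^1\supset K_j^1$ in $S^{n-1}$ with diameters shrinking to $0$. Second, apply Lemma 3 at $e=0$ with $X=S^{n-1}-W_j^0$, $a=t_1$, summable error $\delta_j$, and support in a neighborhood of $\omega((S^{n-1}-W_j^0)\times[0,t_1])$ inside $V$, to obtain a homeomorphism $g_j^0$; the symmetric version of Lemma 3 at $e=1$ (obtained by reversing the $I$-coordinate) gives $g_j^1$. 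The infinite composition $f=\cdots g_2^1 g_2^0 g_1^1 g_1^0$ converges to a homeomorphism of $S^n$ because each factor moves points $\delta_j$-parallel to fibers, the $\delta_j$ are summable, and successive supports cluster near $\omega(S^{n-1}\times\{0,1\})$; by construction $f$ is the identity on $(S^n-V)\cup\omega(S^{n-1}\times[t_1,t_{k-1}])$, and the partition-preserving conclusion~(4) of Lemma 3 keeps the middle-band bound intact at every stage. Third, define $\xi(z,t)=(z,\xi_z(t))$ by letting $\xi_z$ compress $[0,t_1]$ to $[0,\eta_j]$ for $z\in K_j^0$, where $\eta_j$ is chosen so small that $\mathrm{Diam}\,\omega(z\times[0,\eta_j])<\varepsilon/2$ for $z\in K_j^0$ (possible by uniform continuity of $\omega$ on the compact set $K_j^0\times I$); symmetrically compress $[t_{k-1},1]$ for $z\in K_j^1$; and blend continuously to the identity outside $\bigcup_j(W_j^0\cup W_j^1)$.

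Conditions (1)--(3) follow directly from the construction. For condition~(5) one splits $S^{n-1}$ according to whether $z$ lies in some $W_j^0$ (then $\xi$'s compression gives $\omega\xi(z\times[0,t_1])=\omega(z\times[0,\eta_j])$ of diameter $<\varepsilon/2$, with $f$'s $\delta$-parallel motion absorbed into the budget) or not (then the Lemma 3 conclusion~(2) provides $f\omega(z\times[0,t_1])\subset N(\omega(z,t_1),\delta)$); condition~(4) is the symmetric statement at end $1$.

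The main obstacle is condition~(6). Since $\xi_z$ compresses an end band for $z$ near $F_e$, it must stretch somewhere, and any stretching on $[t_1,t_{k-2}]$ would send a subarc of length $t_1$ to a longer arc, beyond the scope of the hypothesis. The resolution, following Daverman's proof of the mismatch theorem [2, 6], is to design $\xi_z$ so that the stretching forced by its compression at one end is absorbed by the opposite end band --- where $f$ is free to act, because $z\in F_0$ implies $z\notin F_1$ and hence $g_j^1$ already shrinks $\omega(z\times[t_{k-1},1])$ --- while $\xi_z$ stays essentially the identity on $[t_1,t_{k-2}]$ so that subarcs there of diameter $t_1$ map to subarcs of diameter at most $t_1$ and the hypothesis applies directly. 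Coordinating the cover $\{W_j^0,W_j^1\}$ with the compression profiles so that (4), (5), and (6) hold simultaneously is the technical heart of the argument and is exactly where Daverman's mismatch-theorem machinery is invoked in place of Eaton's $3$-dimensional construction.
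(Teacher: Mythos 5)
Your high-level observation is correct, and your opening sentence identifies exactly the leverage point: $\mathrm{Int}\,C_e\cup\omega(F_e\times e)$ being $1$-ULC lets Lemma~3 fire at end $e$ for any compact $X$ disjoint from $F_e$. But your concrete instantiation contradicts that criterion. You set $X=S^{n-1}-W_j^0$ with $W_j^0\supset K_j^0$; since $F_0=\bigcup_j K_j^0$ is $F_\sigma$ (typically not contained in any proper open set), $S^{n-1}-W_j^0$ will in general meet $F_0-K_j^0$, so Lemma~3's hypothesis $\mathrm{Int}\,C_0$ $1$-ULC in $C_0-\omega(X\times 0)$ is not available. Relatedly, the phrase ``$W_j^0\supset K_j^0$ with diameters shrinking to~$0$'' is internally inconsistent: the nested compacts $K_j^0$ exhausting $F_0$ have diameters bounded below, so their neighborhoods cannot shrink. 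The infinite composition this construction leads to is also unnecessary; the paper needs only \emph{one} application of Lemma~3.

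The paper's argument is structurally different in a way that fixes both problems at once. Choose a \emph{single} compact $X\subset F_1$ that serves a dual purpose: since $X\cap F_0=\emptyset$, Lemma~3 applies at end~$0$ and gives $f'$ shrinking $\omega(x\times[0,t_1])$ close to $\omega(x\times t_1)$ for $x\in X$, hence (by continuity) for $z$ in some open $N\supset X$. At the same time, $X$ is selected so that controlled $1$-ULC pushes of $2$-cells near $\mathrm{Bd}\,C_1$ already land in $\mathrm{Int}\,C_1\cup\omega(X\times 1)$ (possible because $F_1$ is $F_\sigma$ and $\mathrm{Int}\,C_1\cup\omega(F_1\times 1)$ is $1$-ULC). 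Taking $N^*$ with $X\subset N^*\subset\mathrm{Cl}\,N^*\subset N$, those pushes automatically miss $\omega((S^{n-1}-N^*)\times 1)$, so a homeomorphism $\phi^*$ shrinking the end-$1$ arcs $\omega(s\times[t_{k-1},1])$ for every $s\notin N^*$ exists, supported in $V-(V'\cup\omega(S^{n-1}\times[0,t_{k-1}]))$. Every $z\in S^{n-1}$ is thereby shrunk at at least one end by the single composition $f=\phi^*f'$.

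Your treatment of $\xi$ also diverges in a way that leaves (6) genuinely unresolved. Compressing $[0,t_1]$ to $[0,\eta_j]$ necessarily stretches the complementary interval, and you acknowledge but do not dissolve the resulting threat to the middle-band bound; ``absorb the stretch at the opposite end'' is a slogan, not a construction. The paper's $\xi$ does something else: on $X$ it shifts partition levels, $\xi(x\times t_i)=x\times t_{i+1}$ for $i=1,\dots,k-2$, linearly interpolated, so $z\times[t_1,t_{k-2}]$ is carried \emph{isometrically} onto $z\times[t_2,t_{k-1}]$; on $S^{n-1}-N^*$ it is the identity; on $N^*-X$ it is interpolated via a Urysohn function so that $z\times[t_1,t_{k-2}]$ still maps isometrically into $z\times[t_1,t_{k-1}]$. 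Because the middle band is moved rigidly, (6) follows directly from the hypothesis $\mathrm{Diam}\,\omega(z\times R)<\varepsilon$ for subarcs of diameter $t_1$, and the excess created at the ends is exactly what $f'$ (for $z$ near $X$, end $0$) and $\phi^*$ (for $z$ outside $N^*$, end $1$) have already shrunk, giving (4) and (5). This isometric-shift design is the missing idea in your sketch.
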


\begin{proof}[Proof of Lemma 4.] The shrinking procedure will be executed on both ends of the annulus, that is, some arcs get shrunk very close to $\omega(S^{n-1}\times [0,t_{k-1}])$; the others get shrunk close to $\omega(S^{n-1}\times [t_1,1])$. Lemma 3 will treat the shrinking of the second type of arcs; Lemma 4 will handle the first type of arcs. 

Fix $V$  and $\delta$. Identify a compact subset  $X$  of $F_1$ such that the 2-skeleton of a very small neighborhood of $\text{Bd }C_1$ admits a highly controlled homotopy, and the image of the end of this homotopy being in $\text{Int } C_1 \cup \omega(X \times 1)$. More will be said about $X$  when we return to it.

In light of the Mismatch features of  $\omega(S^{n-1} \times I)$, Lemma 3 can be applied to obtain a 
homeomorphism $f'$ of  $S^n$ to itself, supported in an open subset $V'$ of $V$ containing $\omega(X \times [t_0,t_1))$, fixing all points of $\omega(S^{n-1} \times [t_1,1])$, moving points $\delta$-parallel to fibers, and sending each arc $\omega(x \times [0,t_1])$  very close to $\omega(x \times t_1)$ ($x \in X$). As a result, one can find a neighborhood  $N$ of $X$ in $S^{n-1}$ such that $f' \omega(z \times [0,t_1])$  is very close to $V\cap \omega(z \times t_1)$  for all $z \in N$.

Identify a neighborhood  $N^*$  of  $X$  with Cl $N^* \subset N$.  Properties of $X$ assure the existence of a homeomorphism  $\phi^*:S^n \to S^n$  that is supported in 
$$V - (V' \cup \omega(S^{n-1} \times [0,t_{k-1}])),$$ 
moves points $\delta$-parallel to fibers, sends each arc $\omega(z \times [t_{k-1},1])$ ($z \in S^{n-1})$ very close to itself and, most importantly, sends each arc $\omega(s \times [t_{k-1},1])$ very close to $\omega(s \times t_{k-1})$ for those  $s \in S^{n-1} - N^*$.

Then $f = \phi^* f'$ will serve as  the desired homeomorphism of $S^n$ to itself. The desired homeomorphism  $\xi$  of $S^{n-1} \times I$ can be defined as the identity on $S^{n-1} \times I$; for points $x \in X$, $\xi$ should send $x \times t_{i}$ to  $x \times t_{i+1}$ ($i = 1,\dots,k-2$) and should be linear on the subintervals of  $x \times I$ bounded by the partition levels; finally, on $N^* - X$, $\xi$ can be determined using an interpolating Urysohn function that sends $z \times [t_1,t_{k-2}] $ isometrically into $z \times [t_1,t_{k-1}]$. 
\end{proof}

\begin{proof}[Proof of Lemma 2.] The net effect takes approximately $k$ repetitions of this procedure to shrink all arcs to small size. The outline of the argument is: Given an embedding $\omega$ as above and $\varepsilon>0$, there exists a positive integer $k$ such that one can shrink the diameters of the arcs to $\varepsilon$-small size as the composition of $k$ $\varepsilon$-homeomorphisms $f_1,\dots, f_k$. Moreover, after $f_1,\dots, f_{i-1}$ have been
determined, one can obtain the next $f_{i}$ as a homeomorphism fixed outside any new open set containing the embedded annulus. Thus, only small elements need be moved by any of these homeomorphisms, and if any element $g\in \mathcal{G}$ becomes dangerously large under $f_{i-1}f_{i-2}\cdots f_1$, it need not be moved by $f_i$.

Choose a partition $P_k=\{0=t_0,t_1,\cdots,t_k=1\}$ of $I$ such that 
\begin{equation}
\text{Diam }\omega(z\times [t_{i-1},t_i])<\varepsilon\text{   for each }z\in S^{n-1}, i=1,\dots,k.
\end{equation}
Arguing inductively on $k$. Select a small neighborhood $V_1$ of $\omega(S^{n-1}\times I)$ containing no nondegenerate element with diameter greater than $\varepsilon/3$. Then find an
$\varepsilon/2$-homeomorphism $f_1$ as required in Lemma 4 of $S^n$ fixed off $V_1$ that moves points $\varepsilon/2$-parallel to the fibers of $\omega(S^{n-1} \times I)$. Note that the diameter of $\omega'(z\times [t_{k-2},t_{k-1}])$ ($\omega'=\omega\xi$) will be shrunk to $\varepsilon/2$-small size after using the homeomorphism $f_1$. As a result, we can eliminate $t_{k-1}$ from $P_k$, form a new partition $P'_{k-1}=\{0=t_0,t_1,\dots,t_{k-2},t_{k-1}=1\}$ of $I$ such that 
\begin{equation}
\text{Diam }f_1\omega'(z\times [t_{i-1},t_i])<\varepsilon/2 \text{   for each }z\in S^{n-1}, i=1,\dots,k-1.
\end{equation}
Pick $V_2$ as a small neighborhood of $f_1\omega'(S^{n-1}\times I)$ so close to the new image of the annulus that $V_2$ hits no big elements of the decomposition $f_1(\mathcal{G})$. Applying Lemma 4 to further shrink $f_1(\mathcal{G})$ by using $f_2$ fixed off $V_2$ such that 
\begin{equation}
\text{Diam }f_2f_1\omega'(z\times [t_{k-3},t_{k-2}])<\varepsilon/2.
\end{equation}
Then a new partition $P'_{k-2}=\{0=t_0,t_1,\dots,t_{k-3},t_{k-2}=1\}$ of $I$ is produced. We have
\begin{equation}
\text{Diam }f_2f_1\omega'(z\times [t_{i-1},t_i])<\varepsilon/2 \text{   for each }z\in S^{n-1}, i=1,\dots,k-2.
\end{equation}
Continuing $k-2$ more applications as above, $V_3,f_3,V_4,f_4,\cdots,f_{k-1}$ are defined inductively. The composition $f=f_{k-1}\cdots f_1$ shrinks the elements of $\mathcal{G}$ to $\varepsilon$-small size while having the feature that
\begin{equation}
\text{Diam }f(g)<\varepsilon+\text{Diam }g, \text{ for all } g\in \mathcal{G}. 
\end{equation}
Lemma 4.1 of [6] assures that if $f_i$ moves points close enough to the fibers of $f_{i-1}f_{i-2}\cdots f_1\omega'(S^{n-1}\times I)$, then $f = f_{k-1}f_{k-2}\cdots f_1$ moves points $\varepsilon$-close to the fibers of $\omega'(S^{n-1}\times I)$, and we choose the $f_i$ with that conclusion in mind.

As a consequence, we get that $\mathcal{G}$ is a shrinkable. So is $\xi(\mathcal{G})$. As a result, there is a map $\alpha:S^n \rightarrow S^n$ $\varepsilon$-close to the identity that realizes the decomposition $\xi(\mathcal{G})$; that is,
$\xi(\mathcal{G}) = \{\alpha^{-1}(s) : s \in S^n \}$. For conclusion (4), the sizes of $\alpha(G)$ are small, as are those of $f(\mathcal{G})$, so a controlled shrink of the latter can be arranged that don't allow sizes of the former to grow very much.

\end{proof}

\end{document}